\newcommand{\bigpare}[1]{\bigl(#1\bigr)}
\newcommand{\biggpare}[1]{\biggl(#1\biggr)}
\newcommand{\Bigpare}[1]{\Bigl(#1\Bigr)}
\newcommand{\biggbra}[1]{\biggl\{#1\biggr\}}
\newcommand{\Bigbrac}[1]{\Bigl[#1\Bigr]}
\newcommand{\bigset}[2]{\bigl\{#1\bigm|#2\bigr\}}
\newcommand{\biggset}[2]{\biggl\{#1\biggm|#2\biggr\}}
\newcommand{\norm}[1]{\| #1 \|}
\newcommand{\bignorm}[1]{\bigl\| #1 \bigr\|}
\newcommand{\bigabs}[1]{\bigl| #1 \bigr|}
\newcommand{\jap}[1]{\langle #1 \rangle}
\def\a{\alpha}
\def\b{\beta}
\def\c{\gamma}
\def\d{\delta}
\def\e{\varepsilon}
\def\f{\varphi}
\def\g{\psi}
\def\k{\kappa}
\def\l{\lambda}
\def\s{\sigma}
\def\x{\xi}
\def\y{\eta}
\def\z{\zeta}
\newcommand{\F}{\Phi}
\newcommand{\G}{\Psi}
\renewcommand{\O}{\Omega}
\renewcommand{\S}{\Sigma}
\newcommand{\Op}{\mathrm{Op}}
\def\re{\mathbb{R}}
\def\co{\mathbb{C}}
\def\ze{\mathbb{Z}}
\def\pa{\partial}
\newcommand{\supp}{\mathrm{{supp}}}
\newcommand{\WF}{\mathrm{WF}}
\newtheorem{thm}{Theorem}[section]
\newtheorem{lem}[thm]{Lemma}
\newtheorem{prop}[thm]{Proposition}
\newtheorem{cor}[thm]{Corollary}
\theoremstyle{definition}
\theoremstyle{remark}
\newtheorem{rem}{Remark}[section]
\title{Microlocal resolvent estimates, revisited}
\author{Shu Nakamura}
\address{Graduate School of Mathematical Sciences, University of Tokyo,
 3-8-1, Komaba, Meguro-ku, Tokyo, Japan 153-8914. 
Email: {\tt shu@ms.u-tokyo.ac.jp}.}
\begin{document}
\label{firstpage}
\maketitle

%%%%%%%%%%%%%%%%%%%%%%%%%%%%%%%%%%%%%%
\begin{abstract}
%{\bf Abstract.}
Let $H$ be a Schr\"odinger type operator with long-range perturbation. 
We study the wave front set of the distribution kernel of $(H-\l\mp i0)^{-1}$, 
where $\l$ is in the absolutely continous spectrumof $H$.
The result is a refinement of the microlocal resolvent 
estimate of Isozaki-Kitada \cite{IK1,IK2}. We prove the result for a class of 
pseudodifferential operators on manifolds so that they apply to discrete Schr\"odinger 
operators and higher order operators on the Euclidean space. The proof relies on 
propagation estimates, whereas the original proof of Isozaki-Kitada relies 
on a construction of parametrices. 
%\footnotetext{2010 {\it Mathematics Subject Classification\/}.35P25, 81U05, 35A27}
%\footnotetext{Key words:Schr\"odinger operators, scattering theory, resolvent estimates}
\end{abstract}

%%%%%%%%%%%%%%%%%%%%%%%%%%%%%%%%%%%%%%%%
\section{Introduction}

In this Introduction, we present our main results for Schr\"odinger operators for simplicity. 
The results under more general settings are explained in Section~2. Let 
\[
H=-\frac12 \triangle +V(x) \quad \text{on } L^2(\re^d), \ d\geq 1, 
\]
be a Schr\"odinger operator with a potential $V\in C^\infty(\re^d)$, real-valued. 
We suppose there is $\mu>0$ such that for any multi-index $\a\in\ze_+^d$, 
\[
\bigabs{\pa_x^\a V(x)}\leq C_\a \jap{x}^{-\mu-|\a|}, \quad x\in\re^d, 
\]
with some $C_\a>0$, where $\jap{x}=(1+|x|^2)^{1/2}$. Then it is well-known that 
$\s_{\mathrm{\small ess}}(H)=[0,\infty)$; $H$ has no positive eigenvalues; 
$H$ is absolutely continuous on $(0,\infty)$, and 
\[
(H-\l\mp i0)^{-1} =\lim_{\e\to +0} (H-\l\mp i\e)^{-1}, \quad\l>0, 
\]
exist as operators from $L^2(\re^d,\jap{x}^s dx)$ to $L^2(\re^d,\jap{x}^{-s}dx)$ with 
$s>1/2$. We denote the Fourier transform by $\mathcal{F}$, and we write 
$\hat H =\mathcal{F} H \mathcal{F}^*$. Then the above claim implies 
$(\hat H-\l\mp i0)^{-1}$ exist as operators from $H^s(\re^d)$ to $H^{-s}(\re^d)$, 
and thus they have distribution kernel of order at most 1. 
We denote their distribution kernels by $K^\pm(\l)\in\mathcal{S}'(\re^{2d})$. 
We investigate the wave front set of $K^\pm(\l)$. 
We use somewhat nonstandard notation to represent a point in 
$T^*\re^{2d} \cong \re^{4d}$: We denote
\[
(x,\x,y,\y)\in T^*\re^{2d}, \quad\text{where } (\x,\y)\in \re^{2d}, \text{ and }
(x,y)\in T^*_{(\x,\y)}(\re^{2d}), 
\]
i.e., $\x,\y$ denote points in $\re^d$ (the Fourier space), and $x,y$ denote 
points in the cotangent spaces at $\x, \y$, respectively. We denote
\begin{align*}
\S_0 &=\bigset{(x,\x,-x,\x)}{(x,\x)\in T^*\re^d}, \\
\S_\pm(\l) &= \bigset{(x+t\x,\x,-x,\x)}{(x,\x)\in T^*\re^d, \tfrac12|\x|^2=\l, \pm t\geq 0}, \\
\S_\pm'(\l) &= \bigset{(t\x,\x)}{\tfrac12|\x|^2=\l, \pm t\geq 0}\times 
\bigset{(-t\x,\x)}{\tfrac12|\x|^2=\l, \mp t\geq 0}
\end{align*}
for $\l>0$. We denote the wavefront set of a distribution $T$ by $\WF(T)$. 

\begin{thm}\label{thm:1-1}
For $\l>0$, 
\[
\WF(K^\pm(\l))\subset \S_0\cup \S_\pm(\l)\cup \S_\pm'(\l). 
\]
\end{thm}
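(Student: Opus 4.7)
The plan is to combine Hörmander's propagation-of-singularities theorem with a microlocal radiation condition that selects the outgoing (resp.\ incoming) half of the flowout from the diagonal conormal $\S_0$. Since $K^\pm(\l)$ is the kernel of $(\hat H-\l\mp i0)^{-1}$, it satisfies $(\hat H_\xi - \l)K^\pm(\l) = \delta(\xi - \eta)$ distributionally, together with the analogous identity obtained by applying $\hat H$ in $\eta$ from the right; the wavefront set of the right-hand side is exactly $\S_0$. Elliptic regularity for $\hat H_\xi - \l$, whose principal symbol is $\tfrac12|\xi|^2 - \l$, immediately confines $\WF(K^\pm)\setminus\S_0$ to the joint characteristic set $\{\tfrac12|\xi|^2 = \l = \tfrac12|\eta|^2\}$.

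Next, I would apply Hörmander's theorem in each variable separately. Off $\S_0$, $\WF(K^\pm)$ is invariant under the Hamilton flow of $\tfrac12|\xi|^2$ in the $(\xi,x)$ factor, which translates the cotangent variable as $(\xi,x)\mapsto(\xi,x-t\xi)$ to leading order, and similarly under the analogous flow in $(\eta,y)$. Flowing $\S_0$ in a single factor traces out $\S_\pm(\l)$, while letting the two factors flow independently (as is permitted once a singularity has escaped the diagonal) produces the product set $\S_\pm'(\l)$. Since propagation of singularities is blind to the sign of the imaginary part, this alone only yields the $\pm$-symmetric inclusion
\[
\WF(K^\pm) \subset \S_0 \cup \S_+(\l) \cup \S_-(\l) \cup \S_+'(\l) \cup \S_-'(\l).
\]

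The main obstacle is then to eliminate the backward components $\S_-(\l) \cup \S_-'(\l)$ for $K^+$ (and symmetrically $\S_+(\l) \cup \S_+'(\l)$ for $K^-$), using only the sign of the $\mp i0$ boundary value. This is precisely where propagation estimates would replace the Isozaki--Kitada parametrix construction. Concretely, one needs a microlocal limiting absorption principle: for a symbol $a(\xi,x)\in S^0$ essentially supported in an ``incoming'' region for $K^+$---i.e.\ over bicharacteristic half-orbits that escape to infinity in backward time---a positive-commutator (Mourre-type) argument with an escape function $\phi$ satisfying $H_p\phi \geq c > 0$ on $\esssupp a$ should yield that $\Op(a)(\hat H-\l-i0)^{-1}\Op(b)$ is smoothing for any $b$ microlocalized in a complementary region, and dually for $K^-$. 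Carrying out this commutator estimate uniformly in $\e>0$ within the general pseudodifferential class of Section~2---broad enough to cover discrete Schr\"odinger operators and higher-order operators on manifolds, where no Fourier-integral parametrix is available---is the main technical task; once in hand, two-sided application of this microlocal estimate cuts the preliminary inclusion down to $\S_0 \cup \S_\pm(\l) \cup \S_\pm'(\l)$ and closes Theorem~\ref{thm:1-1}.
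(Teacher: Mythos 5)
Your proposal is conceptually in the right direction in its last paragraph, but the route it lays out is quite different from the paper's and contains a genuine gap in the middle step. The paper does not invoke H\"ormander's propagation-of-singularities theorem or microlocal elliptic regularity at all. Instead it reduces the wave front statement to a semiclassical operator-norm bound $\norm{\Op^h(a_1)(H-\l-i0)^{-1}\Op^h(a_2)}\le C_N h^N$, handles the non-characteristic case by functional calculus, and in the characteristic case writes $(H-\l-i0)^{-1}=i\int_0^\infty e^{it\l}e^{-itH}\,dt$ and proves a propagation estimate $\norm{\Op^h(a_1)e^{-itH}f(H)\Op^h(a_2)}\le C_Nh^N$ \emph{uniformly in $t\ge 0$} (Proposition~\ref{prop:3-1}). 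This is done by an explicit energy/positive-commutator argument with a time-dependent escape symbol $\g(t,x,\x)$ adapted to the scaling $|x|\sim h^{-1}+t$, iterated $m$ times to absorb the error terms produced by the long-range tail $V\in S^{-\mu}$, then combined with the Jensen--Mourre--Perry local decay estimate to secure integrability in $t$.

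The genuine gap in your proposal is in the claim that H\"ormander's theorem plus ellipticity yields the preliminary ``$\pm$-symmetric'' inclusion $\WF(K^\pm)\subset\S_0\cup\S_+\cup\S_-\cup\S_+'\cup\S_-'$. Propagation of singularities only says that $\WF(K^\pm)\setminus\S_0$ is a union of complete bicharacteristics inside the joint characteristic set; it does \emph{not} say that $\WF(K^\pm)$ is contained in the flow-out of $\S_0$. Concretely, consider a point $(x_0,\x_0,y_0,\x_0)$ with $\tfrac12|\x_0|^2=\l$, $x_0+y_0$ not a multiple of $\x_0$, and $x_0$ not a multiple of $\x_0$. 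Such a point lies in neither $\S_0$, $\S_\pm(\l)$, nor $\S_\pm'(\l)$, and the bicharacteristic through it (translating $x_0\mapsto x_0+t\x_0$) never meets $\S_0$, so flow-invariance does not rule it out. Ruling out such points requires an a priori bound, which is precisely what the paper's Proposition~\ref{prop:3-1} supplies — and it does so in one stroke for \emph{all} points off $\S_0\cup\S_+\cup\S_+'$, not merely the ``wrong-sign'' ones. Your final paragraph correctly identifies a positive-commutator estimate with an escape function as the needed tool, but that step is the entire technical content of the paper (Section~4), and it is only sketched. Moreover, the time-uniformity issue is precisely why the paper avoids a naive Egorov/propagation route: as the author remarks at the start of Section~4, the Egorov-type argument holds for each fixed $t$ but not uniformly in $t\ge 0$, so one must build the escape symbol with the $(h^{-1}+t)$-scaling baked in.
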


\begin{rem}
$\S_0$ denotes the diagonal set, and $\WF((\text{kernel of }A))\subset \S_0$ 
if $A$ is a pseudodifferential operator. $\S_\pm(\l)$ represent the {\em free propagation}\/
parts, and it is easy to show $\WF(K^\pm(\l))=\S_0\cup \S_\pm(\l)$ if $V=0$. 
Thus only the third part $\S_\pm'(\l)$ describes the singularities generated by the 
perturbation $V$. 
\end{rem}

\begin{rem}
A mocrolocal resolvent estimate of this form was proved in \cite{N3} 
for the short range case (in more general setting as in Section~2), and applied to 
the analysis of scattering matrices. The proof relies on a construction of parametrices. 
\end{rem}

The (two-sided) microlocal resolvent estimates of Isozaki-Kitada \cite{IK1,IK2} 
follow easily from Theorem~\ref{thm:1-1}. 

\begin{cor}\label{cor:1-2}
Let $a_\pm\in S^0_{1,0}(\re^d)$, i.e., $a_\pm\in C^\infty(\re^{2d})$ and 
for any multi-indices $\a,\b$, 
\[
\bigabs{\pa_x^\a\pa_\x^\b a_\pm(x,\x)}\leq C_{\a\b}\jap{x}^{-|\a|}, \quad x,\x\in\re^d
\]
with some $C_{\a\b}>0$. Suppose there are $0<c_1<c_2$ and  $-1<\c_-<\c_+<1$ such that 
\[
\supp[a_\pm] \subset \biggset{(x,\x)}{\pm\frac{x\cdot\x}{|x|\,|\x|} \geq \pm \c_\pm, 
c_1\leq |\x|\leq c_2, |x|\geq 1}. 
\]
Let $A_\pm =a_\pm(x, D_x)$. Then for any $N>0$, 
\[
\jap{x}^N A_\mp (H-\l\mp i0)^{-1} A_\pm^* \jap{x}^N \in B(L^2(\re^d)), \quad \l>0.
\]
\end{cor}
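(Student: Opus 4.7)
The strategy is to conjugate by the Fourier transform so that Theorem~\ref{thm:1-1} applies directly, and then to combine it with the composition calculus for wave front sets. Setting $\hat B = \mathcal{F} B \mathcal{F}^*$, the corollary is unitarily equivalent to the $L^2$-boundedness of $\jap{D_\x}^N \hat A_\mp \hat R^\pm \hat A_\pm^* \jap{D_\x}^N$, where $\hat R^\pm = \mathcal{F}(H-\l\mp i0)^{-1}\mathcal{F}^*$ has kernel $K^\pm(\l)$. A brief oscillatory-integral computation identifies $\hat A_\pm$ as a pseudodifferential operator on $L^2_\x$ whose kernel has wave front set contained in a subset of $\S_0$ determined by $\esssupp(a_\pm)$; in particular the cone and annular support conditions on $a_\pm$ translate, via the Fourier correspondence $x\leftrightarrow D_\x$, into analogous constraints on the cotangent variable.

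Next, Theorem~\ref{thm:1-1} gives $\WF(K^\pm(\l))\subset \S_0\cup\S_\pm(\l)\cup\S_\pm'(\l)$. Applying the H\"ormander composition rule to $\hat A_\mp\cdot\hat R^\pm\cdot\hat A_\pm^*$, any singularity of the composed kernel would arise from a chain of wave front points through the two intermediate base variables, whose outer legs lie above the essential supports of $a_\mp$ and $a_\pm$. The strict inequality $\c_-<\c_+$ then forces every such chain to be empty, by a case analysis. On $\S_0$ the chain degenerates to a single cotangent direction required to lie in both the $a_-$ and $a_+$ cones, which are disjoint. On $\S_\pm(\l)$, free propagation preserves the outgoing/incoming character of a bicharacteristic under the prescribed sign of the parameter $t$, so the chain cannot interpolate between the two cones. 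On $\S_\pm'(\l)$, the product form of the set forces each leg of the chain to lie on an outgoing (respectively incoming) characteristic ray on the energy shell, again incompatible with the opposite-cone condition at one of the endpoints.

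Hence the composed kernel is smooth; combined with the annular support of $a_\pm$ in the momentum variable, which by the standard pseudodifferential calculus gives rapid decay of the kernel of $\hat A_\pm$ in the difference of arguments, one obtains that the kernel of $\hat A_\mp\hat R^\pm\hat A_\pm^*$ is rapidly decreasing in $\x$ and $\y$. Smoothness together with rapid decay places the kernel in $\mathcal{S}(\re^{2d})$, and since $\jap{D_\x}^N$ preserves $\mathcal{S}$, the full composition $\jap{D_\x}^N \hat A_\mp \hat R^\pm \hat A_\pm^* \jap{D_\x}^N$ has Schwartz-class kernel and is therefore $L^2$-bounded; unwinding the Fourier conjugation yields the corollary. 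The main obstacle I anticipate is the case analysis for $\S_\pm'(\l)$: unlike the previous two components, it has a product structure with two independent base points on the energy shell, so one must combine both cone conditions with the specific outgoing-ray geometry to rule out every admissible chain.
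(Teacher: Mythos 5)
Your proposal is correct and follows essentially the same route as the paper: conjugate by the Fourier transform, apply Theorem~\ref{thm:1-1}, observe that the pseudodifferential factors act microlocally so that the wave front set of the composed kernel is contained in the intersection of $\WF(K^\pm(\l))$ with the essential supports of $a_\mp$ and $a_\pm$, and then check case by case that this intersection is empty on $\S_0$, $\S_\pm(\l)$, and $\S_\pm'(\l)$; the annular support in the momentum variable then gives compact support of the smooth composed kernel, hence a smoothing operator. One minor remark: since pseudodifferential operators have wave front relation supported on the diagonal, the H\"ormander ``chain'' you invoke degenerates to a simple restriction to essential support, so the machinery is a bit heavier than needed; also, contrary to your closing worry, the $\S_\pm'(\l)$ case is the easy one (the paper dismisses it as such), whereas the substantive computation — that the outgoing cone is preserved along $x\mapsto x+t\x$ for $t\ge0$, so a point of $\supp[a_+]$ cannot flow into $\supp[a_-]$ — is the $\S_\pm(\l)$ case, which you state correctly but do not verify.
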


\begin{proof}
It suffices to show $\mathcal{F}A_\mp (H-\l\mp i0)^{-1} A_\pm^*\mathcal{F}^*$ are bounded from 
$H^{-N}(\re^d)$ to $H^N(\re^d)$, $\forall N>0$, i.e., they are smoothing operators. 
We note the distribution kernels of 
$\mathcal{F}A_\mp (H-\l\mp i0)^{-1} A_\pm^*\mathcal{F}^*$ are given by 
$a_\pm(-D_\x,\x) \overline{a_\mp}(D_\y,\y) K^\pm(\l;\x,\y)$. We also note that if 
$x\cdot \x \geq \c_+|x|\,|\x|$ then 
\[
(x+t\x)\cdot \x \geq \c_+|x|\,|\x|+t|\x|^2 \geq \c_+|x+t\x|\,|\x|, \quad t\geq 0, 
\]
and thus we have $\dfrac{x\cdot(x+t\x)}{|x|\,|x+t\x|}\geq \c_+$. This implies that if $(x,\x)\in\supp[a_+]$ 
then $(x+t\x,\x)\notin \supp[a_-]$, $t\geq 0$. Hence we learn that the essential support of 
$(a_-(x,\x) a_+(-y,\y))$ does not intersect $\S_+(\l)$. It is easy to show the essential support of 
$(a_-(x,\x) a_+(-y,\y))$ does not intersect $\S_0$ and $\S_+'(\l)$. These imply 
$\mathcal{F} A_-(H-\l-i0)^{-1} A_+^* \mathcal{F}^*$ is smoothing. 
Similarly, we can show $\mathcal{F} A_+(H-\l+i0)^{-1} A_-^* \mathcal{F}^*$ is smoothing, 
and we complete the proof.
\end{proof}

\begin{rem}
Corollary~\ref{cor:1-2} was proved by Isozaki and Kitada \cite{IK1,IK3}, and it is analogous to 
two sided resolvent estimates of Mourre \cite{M2} (see also G\'erard \cite{Ge}). 
The microlocal resolvent estimate of the above form is used to analyze 
long-range scattering and scattering matrices (\cite{IK2,IK4}). 
\end{rem}

In Section~2, we formulate our main results in more general settings. 
In Section~3, we prove our main theorem assuming a key lemma 
(Proposition~\ref{prop:3-1}), which is proved in Section~4. We discuss so-called one-sided 
microlocal resolvent estimates in Section~5. 

\noindent{\bf Acknowlegement:} A part of this work was done when the author was 
staying at Isaac Newton Institute for Mathematical Sciences for the program: 
{\it Periodic and Ergodic Spectral Problems},  supported by EPSRC Grant Number EP/K032208/1, 
and he thanks the institute and the Simons Foundation for the financial support and its hospitality. 

%%%%%%%%%%%%%%%%%%%%%%%%%%%%%%%%%%%%%%
\section{Model and main theorem}

Here we formulate our model and state our main results that applies to higher order operators 
on $\re^d$ as well as various difference operators on $\ze^d$. 

Let $M$ be a $d$-dimensional $C^\infty$ Riemannian manifold with a smooth density $m$, 
and let $p_0(\x)$, $\x\in M$, be a real-valued smooth function on $M$. 
Let $\mu\in (0,1]$ and let $V\in S^{-\mu}_{1,0}(M)$, i.e., $V\in C^\infty(T^* M)$ and for any 
multi-indices $\a,\b\in \ze_+^d$ there is $C_{\a\b K}$ in each local coordinate
$K\Subset M$,  such that 
\[
\bigabs{\pa_x^\a\pa_\x^\b V(x,\x)} \leq C_{\a\b K}\jap{x}^{-\mu-|\a|}, \quad
\x\in K, x\in T^*_\x M,
\]
where the length of $x$ is defined by the Riemann metric on $T^*_\x M$. 
We denote the quantization of $V$ by $\hat V=V(-D_\x,\x)$, and we write 
$V=\hat V$ where there is no confusion. We denote $\mathcal{H}=L^2(M,m)$, and 
\[
H_0\f(\x) =p_0(\x)\f(\x) \quad \text{for }\f\in D(H_0)=\bigset{\f\in\mathcal{H}}{p_0\f\in\mathcal{H}}.
\]
It is easy to see $H_0$ is self-adjoint. We suppose $\hat V$ is symmetric, 
$H_0$-bounded, and
\[
H=H_0+V, \quad D(H)=D(H_0)
\]
is self-adjoint on $\mathcal{H}$. 

Let $I\Subset \re$ be an interval, and we consider $(H-\l\mp i0)^{-1}$ for $\l\in I$. 
We define the velocity by 
\[
v(\x)=dp_0(\x)\in T^*_\x M, \quad \x\in M. 
\]
We suppose $p_0^{-1}(I)=\bigset{\x\in M}{p_0(\x)\in I}$ is compact, and 
\[
v(\x)\neq 0\quad \text{for } \x\in p_0^{-1}(I),
\]
i.e., $I$ does not contain critical values of $p_0$. Under this assumption, it is easy to see 
that the following claims using the standard Mourre theory (see, e.g., \cite{M1}, \cite{ABG}, 
\cite{N3} Section~2): $\s_{\mathrm{\small p}}(H)\cap I$ is discrete, each eigenvalues are finite dimensional, and 
for $\l\in I\setminus \s_{\mathrm{\small p}}(H)$, $s>1/2$, the limits 
\[
(H-\l\mp i0)^{-1} =\lim_{\e\to+0} (H-\l\mp i\e)^{-1} \in B(H^s, H^{-s})
\]
exist. Let $K^\pm(\l)$ be the distribution kernels of $(H-\l\mp i0)^{-1}$, and 
we consider the microlocal singularities of $K^\pm(\l)$. As well as in the previous section, 
we represent a point in $T^*M$ by 
\[
(x,\x)\in T^*M, \ \text{where } \x\in M, x\in T^*_\x M, 
\]
and also $(x,\x,y,\y)\in T^*(M\times M)$, where $(\x,\y)\in M\times M$, $x\in T^*_\x M$ 
and $y\in T^*_\y M$. We set $\S_0$, $\S_\pm(\l)$, $\S_\pm'(\l)\subset T^*(M\times M)$ as 
\begin{align*}
\S_0&= \bigset{(x,\x,-x,\x)}{(x,\x)\in T^*M}, \\
\S_\pm(\l) &= \bigset{(x+tv(\x),\x, -x,\x)}{p_0(\x)=\l, \pm t\geq 0}, \\
\S_\pm'(\l) &= \bigset{(t v(\x),\x)}{p_0(\x)=\l, \pm t\geq 0}\times 
\bigset{(-t v(\x),\x)}{p_0(\x)=\l, \mp t\geq 0}.
\end{align*}
Then our main result is stated as follows: 

\begin{thm}\label{thm:2-1}
Let $\l\in I\setminus \s_{\mathrm{\small p}}(H)$. Then 
\[
\WF(K^\pm(\l))\subset \S_0\cup \S_\pm(\l)\cup \S_\pm'(\l).
\]
\end{thm}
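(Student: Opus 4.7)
The plan is to show that any $q=(x_0,\x_0,y_0,\y_0)\in T^*(M\times M)\setminus 0$ lying outside $\S_0\cup\S_\pm(\l)\cup\S'_\pm(\l)$ is not in $\WF(K^\pm(\l))$, by producing pseudodifferential cutoffs $A=a(-D_\x,\x)$ and $B=b(-D_\y,\y)$ microlocally elliptic at $(x_0,\x_0)$ and $(y_0,\y_0)$ such that $A(H-\l\mp i0)^{-1}B^*$ is smoothing. The first step is an \emph{elliptic reduction}: viewed as a pseudodifferential operator in $\x$, $H-\l$ has principal symbol $p_0(\x)-\l$ (the $V$ contribution is of order $-\mu<0$), so it is elliptic off the energy shell $\{p_0(\x)=\l\}$. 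Standard elliptic regularity applied to the distributional identities $(H-\l)K^\pm=I$ in $\x$ and (symmetrically) in $\y$ yields
\[
\WF(K^\pm(\l))\setminus\S_0\subset \bigset{(x,\x,y,\y)}{p_0(\x)=p_0(\y)=\l},
\]
so I may restrict attention to $p_0(\x_0)=p_0(\y_0)=\l$, in which case $v(\x_0),v(\y_0)\neq 0$.

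Next, I would combine \emph{propagation of singularities} for the leading Hamiltonian with the long-time propagation estimate Proposition~\ref{prop:3-1}. The null bicharacteristic flow of $H-\l$ is governed at leading order by the Hamilton flow of $p_0$, which fixes $\x$ and translates $x$ linearly along $v(\x)$. By H\"ormander's propagation theorem, $\WF(K^\pm(\l))\setminus\S_0$ is invariant along this flow, and the outgoing/incoming radiation condition encoded by the $\mp i0$ prescription selects the appropriate half-orbit, producing exactly $\S_\pm(\l)$ from the diagonal wave front of $I$. Any additional wave front outside $\S_0\cup\S_\pm(\l)$ must arise from the long-time behaviour of $e^{\mp i(H-\l)t}$. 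Writing formally $(H-\l\mp i0)^{-1}=\mp i\int_0^{\infty}e^{\mp i(H-\l)t}\,dt$ on weighted spaces, the kernel singularities surviving as $t\to\infty$ are pairs of rays that are both outgoing (for the $+$ case, incoming for $-$) at possibly distinct base points $\x_0\neq\y_0$ on the energy shell; the long-range tail of $V$ couples such rays, which is precisely the product structure of $\S'_\pm(\l)$. A microlocal partition of unity then reduces Theorem~\ref{thm:2-1} to finitely many smoothing statements of the form provided by Proposition~\ref{prop:3-1}.

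The main obstacle is the long-time estimate underlying Proposition~\ref{prop:3-1}. For $\mu\in(0,1]$, the slow decay $\jap{x}^{-\mu}$ of $V$ means that $e^{\mp i(H-\l)t}$ is only weakly controlled as $t\to\infty$, and the classical flow of $p_0+V$ drifts from the free flow by genuine long-range corrections, so neither the time integral nor its composition with microlocal cutoffs converges naively. The crucial ingredient must therefore be a Mourre-type propagation bound (e.g.\ $\int_0^\infty \bignorm{\jap{x}^{-s}e^{-iHt}B^*f}^2\,dt<\infty$ for $s>1/2$ and $B$ microlocalized away from the outgoing region) together with an Egorov-type calculus that tracks how microlocal supports drift along the perturbed flow, so that after a finite number of commutator expansions the long-range interaction is absorbed into a smoothing error.
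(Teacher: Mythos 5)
Your elliptic reduction to the energy shell is morally the same as Case 1 in the paper (the paper implements it via functional calculus, writing $(H-\l-i0)^{-1}(1-f(H))$ as a pseudodifferential operator, which is cleaner than invoking classical elliptic regularity for an operator whose symbol may be unbounded), and your identification of Proposition~\ref{prop:3-1} as the key lemma is correct. However, there is a genuine gap in how you propose to use it. Proposition~\ref{prop:3-1} gives the bound $\bignorm{\Op^h(a_1)e^{-itH} f(H) \Op^h(a_2)}\leq C_N h^N$ only \emph{uniformly} in $t\geq 0$; it is explicitly \emph{not} an integrable-in-$t$ bound (the paper even remarks that $\l$ is not assumed to avoid $\s_{\mathrm p}(H)$ there, so integrability fails in general). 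Your sentence that ``a microlocal partition of unity then reduces Theorem~\ref{thm:2-1} to finitely many smoothing statements of the form provided by Proposition~\ref{prop:3-1}'' skips the step of converting the uniform-in-$t$ bound into the integral estimate \eqref{eq-3-3}. That conversion is exactly where the paper inserts the local decay estimate \eqref{eq-3-4} (the Jensen--Mourre--Perry multiple commutator bound) and splits $\int_0^\infty = \int_0^T + \int_T^\infty$ at an $h$-dependent time $T=h^{-M-6}$, using Proposition~\ref{prop:3-1} on $[0,T]$ and local decay on $[T,\infty)$. Without this mechanism your proposed reduction does not close, since $\int_0^T C_N h^N\,dt$ grows linearly in $T$.

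Two further points. First, the appeal to H\"ormander's propagation of singularities theorem with ``radiation conditions'' to ``produce exactly $\S_\pm(\l)$'' is neither used in the paper nor required: the theorem is an inclusion, so one only needs to \emph{exclude} points outside $\S_0\cup\S_\pm(\l)\cup\S'_\pm(\l)$, which the paper does directly via the semiclassical characterization of $\WF$ (reducing to $\bignorm{\Op^h(a_1)(H-\l-i0)^{-1}\Op^h(a_2)}=O(h^\infty)$). Making your propagation claim rigorous for a general long-range perturbed operator of this type is itself a nontrivial task and would be circular here. Second, your suggestion to prove Proposition~\ref{prop:3-1} by an ``Egorov-type calculus'' runs into the difficulty the paper explicitly flags at the start of Section~4: the Egorov argument works for each fixed $t$ but not uniformly in $t>0$; the paper instead builds a monotone time-dependent observable $F(t)=\Op(\g(t,\cdot,\cdot))$ and uses an energy inequality $\pa_t F(t)+i[H,F(t)]\geq R(t)$ with $\int_0^\infty\norm{R(t)}\,dt=O(h^{2N})$, in the style of propagation estimates rather than Egorov conjugation.
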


Microlocal resolvent estimates of Isozaki-Kitada type follows from this analogously 
to the previous section. 

\begin{cor}\label{cor:2-2}
Let $\l\in I\setminus \s_{\mathrm{\small p}}(H)$, $a_\pm \in S^0_{1,0}(M)$, and suppose 
\[
\supp[a_\pm] \subset \biggset{(x,\x)\in T^*M}{\pm \frac{x\cdot v(\x)}{|x|\, |v(\x)|}\geq \pm\c_\pm, 
p_0(\x)\in K},
\]
where $-1<\c_-<\c_+<1$, $K\Subset M$. We set $A_\pm=a_\pm(-D_\x,\x)$. Then 
$A_\mp(H-\l\mp i0)^{-1}A_\pm^*$ are smoothing operators, bounded from $H^{-N}(M)$
to $H^N(M)$ with any $N$. 
\end{cor}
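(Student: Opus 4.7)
The plan is to replicate the proof of Corollary~\ref{cor:1-2} with the Euclidean velocity $\x$ replaced by $v(\x)=dp_0(\x)$ and the Euclidean norm on $\re^d$ replaced by the Riemannian norm on each fiber $T^*_\x M$. First, by standard pseudodifferential calculus, the distribution kernel of $A_\mp(H-\l\mp i0)^{-1}A_\pm^*$ has essential support contained, modulo genuinely smoothing tails, in the set
\[
\bigset{(x,\x,y,\y)\in T^*(M\times M)}{(x,\x)\in\supp[a_\mp],\ (-y,\y)\in\supp[a_\pm]},
\]
taking into account the sign flip that appears when composing on the right with $A_\pm^*$, exactly as in the proof of Corollary~\ref{cor:1-2}. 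By Theorem~\ref{thm:2-1}, smoothingness of the operator reduces to showing that this essential support is disjoint from $\S_0$, $\S_\pm(\l)$, and $\S_\pm'(\l)$.

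The disjointnesses from $\S_0$ and $\S_\pm'(\l)$ are immediate from $\c_-<\c_+$: on these sets the two base variables would be required to realize both the incoming and the outgoing angular conditions at a common $(x,\x)$, contradicting the separation of the support cones. The substantive case is disjointness from $\S_\pm(\l)$, handled by the same propagation estimate as in Corollary~\ref{cor:1-2}: if $(x,\x)\in\supp[a_+]$, i.e. $x\cdot v(\x)\geq\c_+|x|\,|v(\x)|$, then for $t\geq 0$
\[
(x+tv(\x))\cdot v(\x)\geq \c_+|x|\,|v(\x)|+t|v(\x)|^2 \geq \c_+|x+tv(\x)|\,|v(\x)|,
\]
where the last step uses $\c_+<1$ together with the triangle inequality $|x+tv(\x)|\leq|x|+t|v(\x)|$. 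Hence $(x+tv(\x),\x)$ remains in the outgoing cone and therefore outside $\supp[a_-]$, which rules out $\S_+(\l)$. The exclusion of $\S_-(\l)$ is symmetric, applying the same argument to $-x$ with parameter $-\c_-\in(-1,1)$ and relying on the hypothesis $\c_->-1$.

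Combining these three disjointness statements with Theorem~\ref{thm:2-1} yields that the kernel of $A_\mp(H-\l\mp i0)^{-1}A_\pm^*$ has empty wave front set, and so the operator is smoothing and maps $H^{-N}(M)$ into $H^N(M)$ for every $N$. The step most likely to require care is the first one, since $M$ need not be $\re^d$ and both the symbol class $S^0_{1,0}(M)$ and the quantization $a_\pm(-D_\x,\x)$ are defined through local coordinate charts; one must verify that the essential-support statement for the composed kernel is genuinely valid in the manifold setting, up to operators mapping $H^{-N}$ to $H^N$. Once this localization bookkeeping is in place, the geometric step is fiberwise in $\x$ and reduces to the Euclidean computation displayed above.
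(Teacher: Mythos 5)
Your proof is correct and takes the same route as the paper, which simply states that Corollary~\ref{cor:2-2} ``follows analogously'' from Theorem~\ref{thm:2-1} by the argument given for Corollary~\ref{cor:1-2}, with $\x$ replaced by $v(\x)$ and the Euclidean inner product replaced by the Riemannian one on each fiber. One small caveat worth noting: the justification of $\c_+\abs{x}\,\abs{v(\x)}+t\abs{v(\x)}^2\geq\c_+\abs{x+tv(\x)}\,\abs{v(\x)}$ via the triangle inequality $\abs{x+tv(\x)}\leq\abs{x}+t\abs{v(\x)}$ is valid as written only for $\c_+\geq 0$; when $\c_+\in(-1,0)$ one should instead use the reverse inequality $\abs{x+tv(\x)}\geq\abs{x}-t\abs{v(\x)}$ together with $\abs{\c_+}<1$, which yields the same conclusion.
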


\noindent
{\bf Examples:} (1) A straightforward application is a differential operator on $\re^d$. 
Let $H_0$ be an $m$-th order symmetric elliptic partial differential operator with constant 
coefficients. We may write $H_0=p_0(D_x)$ with a real-valued polynomial of degree $m$. 
Suppose 
\[
V=\sum_{|\a|\leq m-1} b_\a(x) D_x^\a
\]
with $b_\x\in C^\infty(\re^d)$ for each $\a\in \ze_+^d$, $|\a|\leq m-1$. Suppose moreover that 
$V$ is symmetric and $\bigabs{\pa_\x^\b b_\a(x)}\leq C_{\a\b}\jap{x}^{-\mu-|\b|}$ 
for each $\a$ and $\b$. Let $I\Subset \re$ be an interval that does not contain critical 
points of $p_0(\x)$. Then Theorem~\ref{thm:2-1} applies for $\l\in I\setminus\s_{\mathrm{\small p}}(H)$. 

\noindent
(2) Another typical application is a difference operator on $\ze^d$. Let $H_0$ be a 
finite difference operator with constant coefficients, i.e., 
\[
H_0u(n) = \sum_{m\in K} \c_m u(n-m), \quad n\in\ze^d, 
\]
where $K\subset\ze^d$ is a finite subset, and $\c_m\in\co$, $m\in K$. We suppose 
$H_0$ is symmetric. Then 
\[
p_0(\x)= \sum_{m\in K} \c_m e^{i\x\cdot m}
\]
is a real-valued trigonometric polynomial on the torus $M=\mathbb{T}^d =(\re/2\pi\ze)^d$. 
Suppose $V(n)$ is the restriction of a smooth real-valued function $\tilde V(x)$ on $\re^d$ 
which satisfy $\bigabs{\pa_\x^\a V(x)}\leq C_{\a}\jap{x}^{-\mu-|\a|}$ for each $\a\in\ze_+^d$. 
Then we can apply Theorem~\ref{thm:2-1} to $H=H_0+V$. We refer Nakamura \cite{N3} Section~7 
for the detail of the construction. 

%%%%%%%%%%%%%%%%%%%%%%%%%%%%%%%%%%%%%%%%%%
\section{Proof of Theorem~\ref{thm:2-1}}

Here we prove our main theorem assuming a proposition, which is proved in the next section. 

\subsection{Notation}
We use several classes of symbols. We denote the standard Kohn-Nirenberg symbol class 
of order $m$ by $S^{m}$, i.e., $a\in S^m$ if $a\in C^\infty(T^*M)$ and for any multi-indices 
$\a,\b\in\ze_+^d$, 
\[
\bigabs{\pa_x^\a\pa_\x^\b a(x,\x)}\leq C_{\a\b}\jap{x}^{m-|\a|}, \quad \x\in M, x\in T^*_\x M
\]
in each (relative compact) local coordinate with some $C_{\a\b}>0$. 
We often use $h$-dependent symbols. 
We denote $a(h,x,\x)\in S^m_h$ if $a(h,\cdot,\cdot)\in C^\infty(T^*M)$, $h\in (0,1]$, and 
for any $\a,\b\in\ze_+^d$, 
\[
\bigabs{\pa_x^\a\pa_\x^\b a(h,x,\x)}\leq C_{\a\b}\min\bigpare{\jap{x}^{m-|\a|}, h^{-m+|\a|}}
\]
for $\x\in M$, $x\in T^*_\x M$, $h\in (0,1]$ with some $C_{\a\b}>0$. 
For example, $a(hx,\x)\in S^0_h$ if $a(x,\x)\in C_0^\infty(T^*M)$ is supported away from $\{x=0\}$. 
Similarly, we use $(h,t)$-dependent symbols, usually supported in the region: $|x|=O(h^{-1}+t)$. 
We denote $a\in S^m_{h,t}$ if $a(h,t,\cdot,\cdot)\in C^\infty(T^*M)$, and for any $\a,\b\in\ze_+^d$, 
\[
\bigabs{\pa_x^\a\pa_\x^\b a(h,t,x,\x)}\leq C_{\a\b}\min\bigpare{\jap{x}^{m-|\a|}, (h^{-1}+t)^{m-|\a|}}
\]
for $\x\in M$, $x\in T^*_\x M$, $h\in (0,1]$, $t\geq 0$ with some $C_{\a\b}>0$. 

Our results are independent of the choice of quantizations, but we employ symmetric 
quantization for convenience. For a symbol $a$, we denote the symmetric quantization, 
e.g., the Weyl quantization $a^W(-D_\x,\x)$ by $\Op(a)$. We also denote the quantization 
of $a(h,hx,\x)$ by $\Op^h(a)$. We refer H\"ormander \cite{Ho} Vol.~3 for the pseudodifferential 
operator calculus. 

\subsection{Semiclassical reduction} 

We consider the ``$+$'' case only. The ``$-$'' case can be handled similarly. 
We suppose 
\[
(x_1,\x_1,-x_2,\x_2)\notin \S_0\cup \S_+(\l)\cup \S_+'(\l),
\]
where $\l\in I\setminus \s_{\mathrm{\small p}}(H)$, $(x_1,x_2)\neq 0$, and we show $(x_1,\x_1,-x_2,\x_2)\notin 
\WF(K^+(\l))$. By the well-known semiclassical characterization of the wave front set (see, e.g., 
Martinez \cite{Ma}), 
it suffice to show the existence of $a_0\in C^\infty(T^*(M\times M))$ such that 
\[
a_0(x_1,\x_1,-x_2,\x_2)\neq 0
\]
and
\[
\bignorm{a_0(-hD_\x,\x,-hD_\y,\y)K^+(\l;\x,\y)}_{L^2}\leq C_N h^N, \quad h\in (0,1],
\]
with any $N$. We consider the case 
\[
a_0(x,\x,-y,\y)= a_1(x,\x)a_2(y,\y),
\]
where $a_1,a_2\in C_0^\infty(T^*M)$ are real-valued. 
Then it is easy to see 
\begin{align*}
&\bignorm{a_0(-hD_\x,\x,-hD_\y,\y)K^+(\l,\x,\y)}_{L^2}\\
&\qquad= \bignorm{\Op^h(a_1)(H-\l-i0)^{-1}\Op^h(a_2)}_{HS} \\
&\qquad \leq C h^{-d}\bignorm{\Op^h(a_1)(H-\l-i0)^{-1}\Op^h(a_2)}_{B(L^2)},
\end{align*}
where $\norm{\cdot}_{HS}$ and $\norm{\cdot}_{B(L^2)}$ denote the Hilbert-Schmidt norm 
and the operator norm, respectively. Thus it suffices to find $a_1,a_2\in C_0^\infty(T^*M)$
such that $a_1(x_1,\x_1)\neq 0$, $a_2(x_2,\x_2)\neq 0$ and 
\begin{equation}\label{eq-3-1}
\bignorm{\Op^h(a_1)(H-\l-i0)^{-1}\Op^h(a_2)}_{B(L^2)} \leq C_N h^N, \quad h\in(0,1],
\end{equation}
for any $N$. In the following, we denote the operator norm of an operator $A$ by 
$\norm{A}$ without subscripts. 

\subsection{Case 1}
At first we consider the easy case, i.e., either $p_0(\x_1)\neq \l$ or $p_0(\x_2)\neq \l$. 
For the moment we suppose $p_0(\x_2)\neq \l$. Then we choose $a_2\in C_0^\infty(T^*M)$ 
such that $a_2(x_2,\x_2)=1$ and 
\[
\supp[a_2]\subset \bigset{(x,\x)}{|p_0(\x)-\l|>3\e}
\]
with some $\e>0$. We then choose $f\in C_0^\infty(\re)$ such that $f(z)=1$ on 
$(\l-\e,\l+\e)$ and $\supp[f]\subset [\l-2\e,\l+2\e]$. By the functional calculus, 
$f(H)$ is a pseudodifferential operator with the symbol in $S^0$, and the 
symbol is supported in $p_0^{-1}([\l-2\e,\l+2\e])$ modulo $S^{-\infty}=\bigcap_N S^{-N}$
(see, e.g., Dimassi-Sj\"ostrand \cite{DS}). Hence, by the asymptotic 
expansion, we learn $f(H) \Op^h(a_2)$ has a symbol in $S_h^{-\infty} =\bigcap_N S^{-N}_h$. 
In particular, we have
\[
\bignorm{\jap{D_\x} f(H)\Op^h(a_2)}\leq C_N h^N, \quad h\in (0,1], 
\]
with any $N$. On the other hand, noting $(z-1)^{-1}(1-f(z))\in S^0(\re)$, we learn 
$(H-\l-i0)^{-1}(1-f(H))$ is a pseudodifferential operator with the symbol in $S^0$. 
We may suppose $\supp[a_1]\cap
\supp[a_2]=\emptyset$, and hence
\begin{equation}\label{eq-3-2}
\bignorm{\Op^h(a_1)(H-\l-i0)^{-1}(1-f(H))\Op^h(a_2)}\leq C_N h^N
\end{equation}
with any $N$. Combining these, we have 
\begin{align*}
&\bignorm{\Op^h(a_1)(H-\l-i0)^{-1}\Op^h(a_2)}\\
&\leq \bignorm{\Op^h(a_1)\jap{D_\x}}\, \bignorm{\jap{D_\x}^{-1}(H-\l-i0)^{-1}\jap{D_\x}^{-1}}\,
\bignorm{\jap{D_\x}f(H) \Op^h(a_2)} \\
&\quad+\bignorm{\Op^h(a_1)(H-\l-i0)^{-1}(1-f(H))\Op^h(a_2)}\\
&\leq C_N' h^{N-2}, \quad h\in (0,1], 
\end{align*}
since $\bignorm{\Op^h(a_j)\jap{D_\x}} =O(h^{-1})$ as $h\to+0$. 
This proves \eqref{eq-3-1}. The case $p_0(\x_1)\neq \l$ is handled similarly. 

\subsection{Case 2}
We now suppose $p_0(\x_1)=p_0(\x_2)=\l$. We choose $f\in C_0^\infty(\re)$ so that 
$\supp[f]\Subset (I\setminus \s_{\mathrm{\small p}}(H))$ and $f=1$ on $[\l-\e,\l+\e]$ with some $\e>0$. 
Since $(H-\l-i0)^{-1}(1-f(H))$ is a pseudodifferential operator, \eqref{eq-3-2} holds as well. 
Thus it suffices to consider $\Op^h(a_1)(H-\l-i0)^{-1}f(H)\Op^h(a_2)$. We recall 
\[
(H-\l-i0)^{-1} =i\lim_{\e\to+0} \int_0^\infty e^{-it(H-\l-i\e)} dt 
=i\int_0^\infty e^{it\l} e^{-itH} dt
\]
in the weak sense. Thus it suffices to show 
\begin{equation}\label{eq-3-3}
\int_0^\infty \bignorm{\Op^h(a_1)e^{-itH}f(H)\Op^h(a_2)}dt 
\leq C_N h^N, \quad h\in (0,1],
\end{equation}
for any $N$. 

\begin{prop}\label{prop:3-1}
Let $(x_1,\x_1,-x_2,\x_2)\notin \S_0\cup\S_+(\l)\cup \S_+'(\l)$, and 
$p_0(\x_1)=p_0(\x_2)=\l$. If $a_j$ are supported in sufficiently small 
neighborhoods of $(x_j,\x_j)$, $j=1,2$, then for any $N$ there is $C_N$ such that 
\[
\bignorm{\Op^h(a_1)e^{-itH} f(H) \Op^h(a_2)} \leq C_N h^N, \quad h\in (0,1], t\geq 0. 
\]
\end{prop}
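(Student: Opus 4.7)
My plan is to prove Proposition~\ref{prop:3-1} by a semiclassical positive-commutator propagation estimate. The hypothesis $(x_1, \xi_1, -x_2, \xi_2) \notin \Sigma_+'(\lambda)$ together with $p_0(\xi_j) = \lambda$ forces (after shrinking the supports of $a_1, a_2$) one of two alternatives: either \textbf{(A)} $\supp a_2$ lies in a non-incoming cone $\{x \cdot v(\xi) > \gamma_- |x||v(\xi)|\}$ with some $\gamma_- > -1$, or \textbf{(B)} $\supp a_1$ lies in a non-outgoing cone $\{x \cdot v(\xi) < \gamma_+ |x||v(\xi)|\}$ with some $\gamma_+ < 1$. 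If both failed, the energy constraint would place the point in $\Sigma_+'(\lambda)$. Case (B) is handled symmetrically to (A) by passing to the adjoint, so that $e^{itH}$ acts on $\Op^h(\overline{a_1})$ and the non-outgoing cone plays the role of the backward-invariant region. I focus on (A). The $\Sigma_+(\lambda)$ condition additionally ensures that the forward free orbit of $\supp a_2$ misses $\supp a_1$: if $\xi_1 \neq \xi_2$ the two orbits have different $\xi$, and if $\xi_1 = \xi_2$ then $\Sigma_+(\lambda)$ directly forbids $x_1 \in x_2 + \mathbb{R}_{\geq 0}\, v(\xi_2)$.

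In case (A), I plan to construct a propagation observable $B_h = \Op^h(b)$ whose symbol $b(x, \xi) = \chi(\phi(x, \xi))\,\tilde f(\xi)\,g(x, \xi)$ combines an outgoing-angle cutoff $\chi(\phi)$ (with $\phi(x, \xi) = x \cdot v(\xi) / (|x||v(\xi)|)$ and $\chi$ smooth monotone, supported in $(\gamma', \infty)$ for some $\gamma' \in (-1, \gamma_-)$), an energy cutoff $\tilde f(\xi)$ near $p_0 = \lambda$, and an angular factor $g$ arranged so that $b \equiv 1$ near $\supp a_2$ and $b \equiv 0$ near $\supp a_1$ (possible because the forward orbit of $\supp a_2$ is separated from $\supp a_1$). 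A direct computation gives
\[
\{p_0, b\} = \chi'(\phi)\,\tilde f\, g\,\frac{|v(\xi)|(1-\phi^2)}{|x|} + \chi(\phi)\,\tilde f\, v(\xi) \cdot \partial_x g \geq 0,
\]
as long as $g$ is itself chosen monotone along the free flow. The long-range perturbation contributes $\{V, b\} = O(\langle x \rangle^{-\mu-1})$, subdominant on the semiclassical support $|x| \sim h^{-1}$.

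In the semiclassical calculus, $i[H, B_h] = h\,\Op^h(\{p_0 + V, b\}) + O(h^2)$. An iterated construction of symbol corrections $b^{(N)} = b + h b_1 + \cdots + h^{N-1} b_{N-1}$, cancelling successive error terms using the non-negative leading bracket as a reservoir, yields $B_h^{(N)}$ satisfying $f(H)\,i[H, B_h^{(N)}]\,f(H) \geq -C_N h^N$. Combined with $\Op^h(a_1) B_h^{(N)} = O(h^\infty)$ (support separation) and $B_h^{(N)} \Op^h(a_2) = \Op^h(a_2) + O(h^\infty)$ (support containment), a Heisenberg-type energy argument produces the desired bound $\|\Op^h(a_1) e^{-itH} f(H) \Op^h(a_2)\| \leq C_N h^N$. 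The main obstacle is precisely the uniformity in $t \geq 0$: elementary Heisenberg integration produces errors $O(t h^N)$ that grow with time. The remedy rests on the global forward-invariance of the non-incoming cone under the classical Hamiltonian flow of $p_0+V$ at energy $\lambda$ (valid because $V$ is semiclassically negligible where the wave packet lives, $|x| \sim h^{-1}$); this invariance allows a single time-independent $B_h^{(N)}$ to control the entire forward evolution via a monotonicity bootstrap in the spirit of~\cite{N3}.
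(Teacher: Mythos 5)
Your overall strategy---positive commutator propagation estimate, dichotomy from $\S_+'(\l)$ into a non-incoming condition on $\supp a_2$ or (dually) a non-outgoing one on $\supp a_1$, and support separation from $a_1$---is the right framework, and matches the paper in spirit. However, your proposal has a genuine gap exactly where you flag the difficulty: the uniformity in $t\geq 0$. You build a \emph{time-independent} observable $B_h^{(N)}$ with $i[H,B_h^{(N)}]\geq -C_Nh^N$ (after $f(H)$-sandwiching), but integrating the Heisenberg identity over $[0,t]$ then produces an error $O(h^N t)$, which is not $O(h^N)$ uniformly in $t$. Your remedy, ``global forward-invariance of the non-incoming cone under the classical flow $\Rightarrow$ a monotonicity bootstrap,'' is not an argument: invariance of a classical region does not by itself convert a time-integrated positive-commutator bound into a pointwise-in-$t$ operator bound, and the paper explicitly warns that an Egorov-type argument works for each fixed $t$ but \emph{not} uniformly in $t$.

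The paper's actual mechanism, which your proposal is missing, is a \emph{time-dependent} propagation observable $F(t)=\Op(\g(t,\cdot,\cdot))$ whose symbol is supported in a ball around the free trajectory $y(t)=h^{-1}x_2+tv(\x_2)$ of radius proportional to $h^{-1}+t$. This linear spreading is the key: it makes $\pa_x\g = O((h^{-1}+t)^{-1})$, and since the long-range perturbation contributes $O(\jap{x}^{-\mu-1})=O((h^{-1}+t)^{-\mu-1})$ on that support, the Heisenberg error $R(t)$ satisfies $\int_0^\infty\norm{R(t)}\,dt = O(h^{(m+1)\mu})$ for the $m$-term corrected symbol $\g=\sum_{j=1}^m\g_j$ (with corrections $\g_j$ decreasing by powers of $h^\mu$, not $h$). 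This integrability in $t$ is precisely what lets one integrate the Heisenberg inequality from $0$ to $\infty$ and obtain $e^{itH}F(t)e^{-itH}\geq F(0)-Ch^{2N}$ uniformly for all $t\geq 0$, from which $F(0)=\abs{\Op^h(a_2)}^2$ and the support separation $\supp a_1(h\cdot)\cap\supp\g(t)=\emptyset$ close the argument. A time-independent observable with $h$-power corrections cannot reproduce this $t$-integrable error structure. To repair your proposal you would need to replace $B_h$ by a symbol family whose support dilates with $t$ in the manner described.
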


\begin{rem}
Here we do not assume $\l\notin \s_{\mathrm{\small p}}(H)$. Thus the integrability in $t$ does not 
necessarily hold. We also note that we assume $(x_1,x_2)\neq 0$, but one of 
$\{x_1,x_2\}$ may be 0. 
\end{rem}

We prove Proposition~\ref{prop:3-1} in the next section, and we complete the proof of Theorem~\ref{thm:2-1}
assuming Proposition~\ref{prop:3-1}. By the multiple commutator estimate (Jensen-Mourre-Perry
\cite{JMP}), we have the following standard local decay estimate: for any $\nu>\k>0$, 
there is $C$ such that 
\begin{equation}\label{eq-3-4}
\bignorm{\jap{D_\x}^{-\nu} e^{-itH} f(H) \jap{D_\x}^{-\nu}}\leq C\jap{t}^{-\k}, 
\quad t\in\re,
\end{equation}
provided $f$ is supported in $I\setminus \s_{\mathrm{\small p}}(H)$. We choose $\k=2$, $\nu=3$, and then 
we have 
\begin{align*}
&\bignorm{\Op^h(a_1)e^{-itH} f(H) \Op^h(a_2)}\\
&\quad \leq \bignorm{\Op^h(a_1)\jap{D_\x}^3}\, \bignorm{\jap{D_\x}^{-3} e^{-itH} f(H) \jap{D_\x}^{-3}}\,
\bignorm{\jap{D_\x}^3 \Op^h(a_2)} \\
&\quad \leq C h^{-6}\jap{t}^{-2}, \quad h\in(0,1],t\in\re,
\end{align*}
where we have used $\bignorm{\Op^h(a_j)\jap{D_\x}^3} =O(h^{-3})$. 
For an arbitrary $M>0$, we set $N=2M+6$ in Proposition~\ref{prop:3-1}, and $T=h^{-M-6}$. 
Then we learn 
\begin{align*}
\int_0^\infty \bignorm{\Op^h(a_1)e^{-itH} f(H) \Op^h(a_2)}dt 
&\leq \int_0^T\cdots dt +\int_T^\infty \cdots dt \\
& \leq C_N h^{2M+6} h^{-M-6} +C h^{-6} h^{M+6} \\
&\leq CM' h^{-M}, \quad h\in (0,1].
\end{align*}
This implies \eqref{eq-3-3}, and hence Theorem~\ref{thm:2-1}. \qed 

%%%%%%%%%%%%%%%%%%%%%%%%%%%
\section{Propagation estimate : Proof of Proposition~\ref{prop:3-1}}

We employ propagation estimate argument similar to that in Nakamura \cite{N1}. 
We note that the Egorov-type argument works for each $t$, but not uniformly in $t>0$. 
Thus we cannot apply the Egorov-type argument directly here. 

Let $(x_1,\x_1,-x_2,\x_2)$ as in the proposition. Since $(x_1,\x_1,-x_2,\x_2)\notin \S_+'(\l)$, 
either $x_1+t v(\x_1)\neq 0$ for $t\leq 0$, or $x_2+tv(\x_2)\neq 0$ for $t\geq 0$. 
We first consider the later case, i.e., 
\[
x_2+tv(\x_2)\neq 0, \quad t\geq 0.
\]
We remark that this assumption implies $x_2\neq 0$, but the case $x_1=0$ is not
excluded. 
Then there exist $\d_1,\d_2>0$ such that 
\begin{equation}\label{eq-4-1}
\O(t)\cap \{(x_1,\x_1)\}=\emptyset, \quad \O(t)\cap(\{0\}\times M)=\emptyset, 
\quad\text{for } t\geq 0, 
\end{equation}
where 
\[
\O(t)=\bigset{(x,\x)}{|x-(x_2+tv(\x_2))|\leq 3\d_1(1+t), |\x-\x_2|\leq \d_2}.
\]
We may also suppose $\d_2$ is so small that: 
\begin{equation}\label{eq-4-2}
\text{if }|\x-\x_2|\leq 2\d_2 \text{ then } |v(\x)-v(\x_2)|<\d_1/2.
\end{equation}
We choose $\F\in C^\infty([0,\infty))$ such that $\F(s)=1$ if $s\leq 1/2$; $\F(s)=0$ 
if $s\geq 1$; $\F(s)>0$ if $s<1$; and $\F'(s)\leq 0$ for $s\leq 1$. We also write 
$\G(s)=\F(s)^2$, $s\geq 0$. We now set
\[
a_j(x,\x) =\F\biggpare{\frac{|x-x_j|}{\d_1}}\F\biggpare{\frac{|\x-\x_j|}{\d_2}}, 
\quad (x,\x)\in T^*M, j=1,2, 
\]
then $a_j\in S^0$, and $a_j(hx,\x)\in S^0_h$. 
We also set
\[
\phi_0(t,x,\x)= \F\biggpare{\frac{|x-y(t)|}{\d_1(h^{-1}+t)}}\F\biggpare{\frac{|\x-\x_2|}{\d_2}}, 
\quad (x,\x)\in T^*M, t\geq 0, 
\]
where 
\[
y(t)=h^{-1}x_2+tv(\x_2) = h^{-1}(x_2+htv(\x_2)).
\]
We note, by the condition \eqref{eq-4-1}, 
\[
|y(t)|\geq 3\d_1h^{-1}(1+ht), \quad t\geq 0. 
\]
On the other hand, by the support property of $\F$, we have 
\[
|x-y(t)|\leq \d_1 h^{-1} (1+ht)
\]
on the support of $\phi_0(t;\cdot,\cdot)$. Hence we learn 
\[
2\d_1h^{-1}(1+ht) \leq |x|\leq C h^{-1}(1+ht)
\]
with some $C$ on the support of $\phi_0(t;\cdot,\cdot)$, and 
this implies $\phi_0 \in S^0_{h,t}$. We denote the support of $\phi_0(t,\cdot,\cdot)$ by 
\[
\O_0(t)=
\bigset{(x,\x)}{|x-y(t)|\leq \d_1(h^{-1}+t), |\x-\x_2|\leq \d_2}.
\]
Now let $\g_0(t,x,\x)$ be the symbol of $|\Op(\phi_0(t,\cdot,\cdot))|^2$. 
Clearly $\g_0\in S^0_{h,t}$ and the principal symbol is 
\[
\g_0^0(t,x,\x)= \G\biggpare{\frac{|x-y(t)|}{\d_1(h^{-1}+t)}}\G\biggpare{\frac{|\x-\x_2|}{\d_2}}, 
\]
i.e., $\g_0-\g_0^0\in S^{-1}_{h,t}$. We note $\g_0$ is supported in $\O_0(t)$ modulo 
$S^{-\infty}_{h,t}$. 

Then we compute 
\begin{align*}
&\pa_t \g_0^0(t,x,\x) +v(\x)\cdot \pa_x\g_0^0(t,x,\x) \\
&= \frac{1}{\d_1(h^{-1}+t)}\biggbra{-\frac{|x-y(t)|}{h^{-1}+t}
+\frac{x-y(t)}{|x-y(t)|}\cdot(v(\x)-v(\x_2))}\times \\
&\qquad\times \G'\biggpare{\frac{|x-y(t)|}{\d_1(h^{-1}+t)}}\G\biggpare{\frac{|\x-\x_2|}{\d_2}}.
\end{align*}
Since 
\[
\frac{\d_1}{2}(h^{-1}+t)\leq |x-y(t)|, \quad |v(\x)-v(\x_2)|\leq \frac{\d_1}{2}
\]
on the support, we learn $\{\cdots\}$ in the RHS is nonpositive. Recalling $\G'(s)\leq 0$, 
we learn 
\begin{equation}\label{eq-4-3}
\pa_t\g_0^0(t,x,\x)+v(\x)\cdot\pa_x \g_0^0(t,x,\x)\geq 0, \quad (x,t)\in T^*M, t\geq 0.
\end{equation}
We also note $\pa_t\g^0_0, \pa_x\g^0_0\in S^{-1}_{h,t}$. Then by the sharp G{\aa}rding 
inequality and asymptotic expansions, we learn 
\[
\pa_t\Op(\g_0^0)+i[H_0,\Op(\g_0^0)]\geq \Op(r_0^0)
\]
with some $r_0^0\in S^{-2}_{h,t}$. We then have, using the assumption on $V$, 
\[
\pa_t\Op(\g_0)+i[H,\Op(\g_0)]\geq \Op(r_0)
\]
with some $r_0\in S^{-1-\mu}_{h,t}$, supported in $\O(t)$ modulo $S^{-\infty}_{h,t}$. 

Now we choose constants $\c_j$, $j=1,2,\dots$, so that $1<\c_1<\c_2<\cdots<2$. 
Let $C_j>0$, $j=1,2,\dots$, be constants decided later. We then set
\[
\g_j(t,x,\x)= C_jh^{(j-1)\mu}\bigpare{h^\mu-(h^{-1}+t)^{-\mu}}
\G\biggpare{\frac{|x-y(t)|}{\c_j\d_1(h^{-1}+t)}}\G\biggpare{\frac{|\x-\x_2|}{\c_j\d_2}}
\]
for $(x,\x)\in T^*M$, $t\geq 0$ and $j=1,2,\dots$. 
By direct computations, we see $\g_j\in h^{j\mu} S^0_{h,t}$ and $\pa_t\g_j\in h^{j\mu}S^{-1}_{h,t}$. 
Moreover, we have 
\[
\pa_t\g_j+v(\x)\cdot\pa_x\g_j \geq \mu C_jh^{(j-1)\mu}(h^{-1}+t)^{-1-\mu}
\G\biggpare{\frac{|x-y(t)|}{\c_j\d_1(h^{-1}+t)}}\G\biggpare{\frac{|\x-\x_2|}{\c_j\d_2}},
\]
which is proved similarly to \eqref{eq-4-3}. We set 
\[
\O_j(t)=\bigset{(x,\x)}{|x-y(t)|\leq \c_j\d_1(1+t), |\x-\x_2|\leq \c_j\d_2}.
\]
Then $\g_j(t,x,\x)$ are supported in $\O_j(t)$, and 
\[
\pa_t\g_j + v(\x)\cdot\pa_x\g_j(t,x,\x) \geq \mu\k_j C_j h^{(j-1)\mu} (h^{-1}+t)^{-1-\mu}
\quad \text{on $\O_{j-1}(t)$},
\]
$j=1,2,\dots$, where $\k_j>0$ are constants depending only on 
the choice of $\{\c_j\}$ and $\G$. Hence, if we choose $C_1$ sufficiently large, 
we have 
\[
\pa_t\g_1+v(\x)\cdot\pa_x\g_1 +r_0\geq 0 \quad \text{on }T^*M\times([0,\infty).
\]
Then by using the sharp G{\aa}rding inequality again, we have 
\[
\pa_t\Op(\g_0+\g_1)+i[H,\Op(\g_1+\g_2)]\geq \Op(r_1)
\]
with some $r_1\in h^\mu S^{-1-\mu}_{h,t}$, supported in $\O_1(t)$ modulo $S^{-\infty}_{h,t}$. 
Repeating this procedure, we decide $C_2,C_3,\dots$, and we have 
\[
\pa_t\Bigpare{\Op\Bigpare{{\textstyle \sum_{j=1}^m\g_j}}}
+i\Bigbrac{H,\Op\Bigpare{{\textstyle \sum_{j=1}^m\g_j}}}\geq \Op(r_m),
\]
where $r_m \in h^{m\mu}S^{-1-\mu}_{h,t}$, supported in $\O_m(t)$ modulo $S^{-\infty}_{h,t}$. 
In particular, we have 
\[
\int_0^\infty \bignorm{\Op(r_m)}dt \leq Ch^{m\mu}\int_0^\infty (h^{-1}+t)^{-1-\mu}dt
\leq C' h^{(m+1)\mu}.
\]
We fix $m$ large enough so that $(m+1)\mu>2N$, where $N$ is the exponent in Proposition~3.1. 

Then we set
\[
\g(t,x,\x)=\sum_{j=1}^m \g_j(t,x,\x)\in S^0_{h,t}.
\]
We summarize the properties of $\g$. 

\begin{lem}\label{lem:4-1}
$\g$ and $F(t)=\Op(\g(t,\cdot,\cdot)$ satisfy the following properties:
\begin{enumerate}
\renewcommand{\labelenumi}{{\rm (\theenumi)}}
\item $\g\in S^0_{h,t}$ and $F(0)=|\Op^h(a_2)|^2$. 
\item $g$ is supported in 
\[
\tilde \O(t)= \bigset{(x,\x)}{|x-y(t)|\leq 2\d_1(h^{-1}+t),|\x-\x_2|\leq 2\d_2}
\]
modulo $S^{-\infty}_{h,t}$.
\item $F(t)$ satisfies the energy inequality:
\[
\pa_t F(t)+i[H,F(t)]\geq R(t),
\]
where $\int_0^\infty \norm{R(t)}dt \leq Ch^{2N}$. 
\end{enumerate}
\end{lem}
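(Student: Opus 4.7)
The plan is to verify the three items by assembling the objects already built in the construction preceding the lemma; essentially all the analytical work---the iterative adjustment of the $\gamma_j$'s, the sharp G{\aa}rding applications, and the bookkeeping of orders---has been carried out before the statement. I read $\gamma$ as $\gamma_0 + \sum_{j=1}^m \gamma_j$, so that $F(t) = \Op(\gamma(t,\cdot,\cdot))$ carries both the principal piece and all the corrections produced by the induction.

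For item (1), the symbol-class membership is immediate: $\gamma_0 \in S^0_{h,t}$ by direct inspection, and $\gamma_j \in h^{j\mu} S^0_{h,t} \subset S^0_{h,t}$ for $j \geq 1$. For the initial condition, each correction $\gamma_j$ ($j\geq 1$) carries the prefactor $h^\mu - (h^{-1}+t)^{-\mu}$, which vanishes at $t=0$, so $\gamma(0,\cdot,\cdot) = \gamma_0(0,\cdot,\cdot)$. Since $y(0) = h^{-1}x_2$, the function $\phi_0(0,x,\x) = \F(|hx-x_2|/\d_1)\F(|\x-\x_2|/\d_2)$ equals $a_2(hx,\x)$, hence $\Op(\phi_0(0,\cdot,\cdot)) = \Op^h(a_2)$ and $F(0) = |\Op^h(a_2)|^2$. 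For item (2), each $\gamma_j$ is supported in $\Omega_j(t)$ modulo $S^{-\infty}_{h,t}$; the constants $\c_j \in (1,2)$ are chosen increasing with $\c_m<2$, so $\Omega_j(t) \subset \Omega_m(t) \subset \tilde\Omega(t)$ for all $j\leq m$, and the ``modulo $S^{-\infty}_{h,t}$'' qualifier absorbs the spillover produced by symbolic calculus and the G{\aa}rding steps.

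For item (3), the energy inequality is the endpoint of the telescoping estimate built up in the preamble. The induction yields
\[
\pa_t \Op\Bigpare{{\textstyle\sum_{j=0}^k \gamma_j}} + i\Bigbrac{H, \Op\Bigpare{{\textstyle\sum_{j=0}^k \gamma_j}}} \geq \Op(r_k),
\]
with $r_k \in h^{k\mu} S^{-1-\mu}_{h,t}$ supported in $\Omega_k(t)$ modulo $S^{-\infty}_{h,t}$. Setting $k=m$ and $R(t) = \Op(r_m)$, Calder\'on-Vaillancourt (applied after the natural $(h^{-1}+t)$-rescaling on the support) gives $\|R(t)\| \leq Ch^{m\mu}(h^{-1}+t)^{-1-\mu}$, and integrating in $t$,
\[
\int_0^\infty \|R(t)\|\,dt \leq Ch^{m\mu}\!\int_0^\infty (h^{-1}+t)^{-1-\mu}\,dt \leq C'h^{(m+1)\mu} \leq C''h^{2N},
\]
since $m$ was fixed with $(m+1)\mu > 2N$. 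The only real subtlety---already confronted during the construction---is the passage from the scalar positivity of $\pa_t\gamma_k + v(\x)\cdot \pa_x\gamma_k + r_{k-1}$ on $\Omega_{k-1}(t)$ to the corresponding operator inequality via sharp G{\aa}rding, combined with control of $i[V, \Op(\gamma_j)] \in h^{j\mu}S^{-1-\mu}_{h,t}$ using $V \in S^{-\mu}_{1,0}$ and the support bound $|x| \gtrsim h^{-1}+t$, which is exactly what gives the commutator errors the $(h^{-1}+t)^{-1-\mu}$ decay needed for $L^1$-integrability in $t$.
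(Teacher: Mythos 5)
Your proof is correct and follows exactly the route the paper intends: Lemma~\ref{lem:4-1} is stated as a summary of the construction carried out immediately before it, so the ``proof'' is precisely the assembly you perform. You are also right to read $\g$ as $\g_0+\sum_{j=1}^m\g_j$ rather than the literally printed $\sum_{j=1}^m\g_j$ --- otherwise item (1) would fail since each $\g_j$ with $j\geq1$ vanishes at $t=0$ via the prefactor $h^\mu-(h^{-1}+t)^{-\mu}$ --- so you have caught and silently corrected the index slip in the paper's final definition of $\g$ (and the matching slips in the energy-inequality displays preceding it).
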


\begin{proof}[Proof of Proposition~\ref{prop:3-1}]
We recall the Heisenberg equation:
\[
\frac{d}{dt}\bigpare{e^{itH}F(t)e^{-itH}} =e^{itH} \bigpare{\pa_t F(t)+i[H,F(t)]}e^{-itH},
\]
and hence we have 
\[
\frac{d}{dt}\bigpare{e^{itH} F(t) e^{-itH}} \geq e^{itH} R(t) e^{-itH}.
\]
Integrating this inequality, we learn 
\[
e^{itH}F(t)e^{-itH} -F(0) \geq \int_0^t e^{itH} R(t) e^{-itH} dt \geq -C h^{2N}
\]
for all $t\geq 0$ by Lemma~\ref{lem:4-1}(3). Then, by using Lemma~\ref{lem:4-1}(1), we have 
\[
e^{-itH} |\Op^h(a_2)|^2 e^{itH} \leq F(t)+Ch^{2N},
\]
and hence 
\[
\bigabs{\Op^h(a_1) e^{-itH}\Op^h(a_2)}^2 \leq \Op^h(a_1)F(t)\Op^h(a_1)+C h^{2N}.
\]
We recall $\supp[a_1(h\cdot,\x)]\cap \supp[\g(t,\cdot,\cdot)]=\emptyset$;  
$\g\in S^0_{h,t}$ and hence $\g(t,\cdot,\cdot)$ is uniformly bounded in $S^0_h$. 
Then, by the asymptotic expansion, we learn $\norm{\Op^h(a_1)F(t)}=O(h^{2N})$, $h\to +0$, 
uniformly in $t\geq 0$. These imply 
\[
\bignorm{\Op^h(a_1) e^{-itH}\Op^h(a_2)}^2\leq Ch^{2N},
\]
and we complete the proof of Proposition~\ref{prop:3-1}, provided $x_2+tv(\x_2)\neq 0$ for $t\geq 0$. 

We now turn to the case $x_1+tv(\x_1)\neq 0$ for $t\leq 0$. We consider 
\[
(\Op^h(a_1) e^{-itH} \Op^h(a_2))^*= \Op^h(a_2)e^{itH} \Op^h(a_1),
\]
and replace $t$ by $-t$. Then is is easy to check $(x_2,\x_2,-x_1,\x_1)$ satisfies 
the conditions in the other case. Thus the conclusion follows from the same argument as above. 
\end{proof}

%%%%%%%%%%%%%%%%%%%%%%%%%%%%%%%%%%%%%%%%%
\section{One-sided estimates}

In Isozaki-Kitada \cite{IK1,IK3}, another kind of estimates, called one-sided microlocal 
resolvent estimates, are proved. In this section, we formulate the one-sided estimates 
under our setting, and we show they are proved by the same method used to prove 
Theorem~\ref{thm:2-1}. 

\begin{thm}\label{thm:5-1}
Let $\l\in I\setminus \s_{\mathrm{\small p}}(H)$ and suppose $a_\pm\in S^0(M)$ such that 
\[
\supp[a_\pm]\subset \biggset{(x,\x)\in T^*M}{\pm \frac{x\cdot v(\x)}{|x|\,|v(\x)|}>
\pm(-1+\e), p_0(\x)\in K}
\]
where $\e>0$, $K\Subset M$. Let $\nu>1$, $0<s<\nu-1$. 
Then $(H-\l\mp i0)^{-1}\Op(a_\pm)$ are bounded from $H^{-s}(M)$ 
to $H^{-\nu}(M)$. 
\end{thm}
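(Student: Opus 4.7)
My plan follows the proof of Theorem~\ref{thm:2-1}: use the weak integral representation of the boundary-value resolvent, isolate the part near $\sigma_{\mathrm p}(H)$ via a spectral cutoff, dyadically decompose $\Op(a_+)$ by the size of $|x|$, and combine the escape-function propagation of Section~4 with the local decay estimate \eqref{eq-3-4} on each dyadic scale. Choose $f\in C_0^\infty(I\setminus\s_{\mathrm{\small p}}(H))$ equal to $1$ near $\l$; then $(H-\l-i0)^{-1}(1-f(H))\Op(a_+)$ is a bounded pseudodifferential operator of order $0$ mapping $H^{-s}\to H^{-s}\hookrightarrow H^{-\nu}$ (since $\nu>s+1>s$), so it contributes a harmless bounded term. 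Using $(H-\l-i0)^{-1}f(H)=i\int_0^\infty e^{it\l}e^{-itH}f(H)\,dt$ weakly, the theorem reduces to
\[
\int_0^\infty\bignorm{\jap{D_\x}^{-\nu}e^{-itH}f(H)\Op(a_+)\jap{D_\x}^s}_{L^2\to L^2}\,dt<\infty.
\]

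I would decompose $\Op(a_+)=\sum_{j\geq 0}\Op(a_+^{(j)})$ dyadically with $a_+^{(j)}\in S^0_{1,0}$ supported in the outgoing cone intersected with $\{2^{j-1}<|x|<2^{j+1}\}$ (the $j=0$ piece absorbing $|x|\leq 1$). Writing $b^{(j)}=2^{-js}a_+^{(j)}\jap{x}^s$, the symbols $b^{(j)}$ lie in $S^0_{1,0}$ uniformly in $j$ (still supported at $|x|\sim 2^j$, still outgoing) and $\Op(a_+^{(j)})\jap{D_\x}^s=2^{js}\Op(b^{(j)})$ modulo lower-order terms. It therefore suffices to prove, uniformly in $j\geq 0$,
\[
\int_0^\infty\bignorm{\jap{D_\x}^{-\nu}e^{-itH}f(H)\Op(b^{(j)})}\,dt\leq C\cdot 2^{j(1-\nu)},
\]
since then $\sum_j 2^{js}\cdot 2^{j(1-\nu)}=\sum_j 2^{j(s+1-\nu)}<\infty$ precisely when $s<\nu-1$, matching the hypothesis.

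For fixed $j$ I split $\int_0^\infty=\int_0^{T_j}+\int_{T_j}^\infty$ with $T_j=2^{jk}$ for an appropriate $k$, in the spirit of the end of Section~3 with $h=2^{-j}$. On $[0,T_j]$ the argument of Section~4, with $h^{-1}$ replaced by the dyadic scale $2^j$, yields the propagation bound: covering the support of $b^{(j)}$ by finitely many outgoing tubes, one constructs $\c^{(j)}(t,x,\x)\in S^0_{h,t}$ with $h=2^{-j}$, supported in $\{|x-x_0-tv(\x_0)|\leq\d_1(2^j+t),\ |\x-\x_0|\leq\d_2\}$, with $\Op(\c^{(j)}(0))\geq|\Op(b^{(j)})|^2$ modulo smoothing and $\partial_t\Op(\c^{(j)})+i[H,\Op(\c^{(j)})]\geq\Op(r^{(j)})$ where $\int_0^\infty\|\Op(r^{(j)})\|\,dt=O(2^{-jN})$ for any prescribed $N$. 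Since $|x|\geq c(2^j+t)$ on the support of $\c^{(j)}(t)$, the calculus gives $\bignorm{\jap{D_\x}^{-\nu}\Op(\c^{(j)}(t))\jap{D_\x}^{-\nu}}=O((2^j+t)^{-2\nu})$, and integrating the Heisenberg inequality yields $\bignorm{\jap{D_\x}^{-\nu}e^{-itH}f(H)\Op(b^{(j)})}\leq C(2^j+t)^{-\nu}+C\cdot 2^{-jN/2}$, which integrated over $[0,T_j]$ is $O(2^{j(1-\nu)})$ for $N,k$ suitably chosen and $\nu>1$. On $[T_j,\infty)$ the local decay estimate \eqref{eq-3-4} with $\k\in(1,\nu)$ combined with $\bignorm{\jap{D_\x}^\nu\Op(b^{(j)})}\leq C\cdot 2^{j\nu}$ gives $\int_{T_j}^\infty C\cdot 2^{j\nu}\jap{t}^{-\k}\,dt\leq C\cdot 2^{j\nu}T_j^{1-\k}\leq C\cdot 2^{j(1-\nu)}$ for $k=(2\nu-1)/(\k-1)$.

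The main obstacle is securing uniformity across the dyadic scales $j$ in the iterated sharp-G\aa rding construction of Section~4: the constants $C_l$ there a priori depend on tube geometry, so one must verify (via a rescaling $y=x/2^j$ reducing the construction at scale $2^j$ to an $h=1$ case) that they can be chosen independently of $j$, and that the lower-order corrections generated by $V\in S^{-\mu}_{1,0}$ transform compatibly under this rescaling. The $(-)$-case is handled by taking the adjoint $((H-\l-i0)^{-1}\Op(a_-))^*=\Op(a_-)^*(H-\l+i0)^{-1}$ and reversing time, exactly as in the final paragraph of Section~4.
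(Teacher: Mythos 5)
Your proposal is correct and follows essentially the same strategy as the paper's proof: both use the Section~4 propagation construction with semiclassical parameter $h$ (your $h=2^{-j}$), extract the crucial $(h^{-1}+t)^{-\nu}$ decay from the fact that the escape function at time $t$ is supported where $|x|\gtrsim h^{-1}+t$, combine with the local decay estimate \eqref{eq-3-4} for large $t$, and sum dyadically with the arithmetic $\sum_j 2^{j(s+1-\nu)}<\infty$ giving the $s<\nu-1$ constraint. The only cosmetic differences are that you apply $\jap{D_\x}^{-\nu}$ directly to the Heisenberg inequality to get the decay factor, whereas the paper packages the same estimate via the cutoffs $\z,\bar\z$ and a separate lemma, and you spell out the Littlewood--Paley decomposition which the paper invokes as ``standard''; the uniformity-in-$j$ concern you flag is real but routine (compactness of the outgoing unit cone over $p_0^{-1}(K)$, plus the scaling structure of the $S^0_{h,t}$ classes).
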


We consider the ``+'' case only. The other case is proved similarly. 
Suppose $(x_2,\x_2)\in \supp[a_+]$. Then $x_2+tv(\x_2)\neq 0$ for 
$t\geq 0$, and we can construct the symbols used in Section~4. 
We use the same notation as in Section~4, and we use the same time-dependent 
symbol $\g(t,x,\x)$ constructed from $a_2(x,\x)$, which is supported in a small neighborhood 
of $(x_2,\x_2)$. Let $f \in C_0^\infty(\re)$ also as in Section~4. 
We choose $\chi\in C_0^\infty(M)$ such that 
$\chi(\x)f(H)=f(H)$ modulo $S^{-\infty}$. We then set
\[
\z(t,x,\x) =\G\biggpare{\frac{2|x|}{\d_1(h^{-1}+t)}}\chi(\x), \quad (x,\x)\in T^*M, t\geq 0, 
\]
and $\bar\z(t,x,\x)=\chi(\x)-\z(t,x,\x)=(1-\G(\cdots))\chi(\x)$. 
We note $\z,\bar\z\in S^0_{h,t}$. Then we observe $\norm{\Op(\z)\Op(\g)}=O(h^{2N})$ as $h\to+0$, 
uniformly in $t\geq 0$, again as in Section~4, since $\supp[\z]\cap \supp[\g]=\emptyset$ 
modulo $S^{-\infty}_{h,t}$. Thus we arrive at the following estimate, analogously to 
Proposition~\ref{prop:3-1}:

\begin{lem}
For any $N$, there is $C_N>0$ such that
\[
\bignorm{\Op(\z(t,\cdot,\cdot))e^{-itH} f(H) \Op^h(a_2)}\leq C_N h^N, \quad h\in (0,1], t\geq 0.
\]
\end{lem}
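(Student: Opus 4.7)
The plan is to mirror the argument of Proposition~\ref{prop:3-1} essentially verbatim, with $\Op(\z(t,\cdot,\cdot))$ in the role of $\Op^h(a_1)$ and the commutation $[H,f(H)]=0$ used to pass $f(H)$ through the propagator. The point is that, since $a_+$ is supported where $x\cdot v(\x)/(|x|\,|v(\x)|)>-1+\e$, each $(x_2,\x_2)\in\supp[a_+]$ satisfies $x_2+tv(\x_2)\neq 0$ for all $t\geq 0$; therefore the time-dependent observable $F(t)=\Op(\g(t,\cdot,\cdot))$ constructed in Section~4 from a suitable cutoff $a_2$ localized around $(x_2,\x_2)$ is at our disposal, together with the propagation inequality of Lemma~\ref{lem:4-1}.

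First I would integrate $\pa_t F(t)+i[H,F(t)]\geq R(t)$ in time via the Heisenberg equation, exactly as in the proof of Proposition~\ref{prop:3-1}, to obtain
\[
e^{-itH}\bigabs{\Op^h(a_2)}^2 e^{itH} \leq F(t) + Ch^{2N}, \quad t\geq 0.
\]
Exploiting $[H,f(H)]=0$, so that $f(H)$ commutes with $e^{\pm itH}$, and then sandwiching by $\Op(\z(t,\cdot,\cdot))f(H)$ and its adjoint, I would deduce
\[
\bigabs{\Op(\z(t,\cdot,\cdot))e^{-itH} f(H) \Op^h(a_2)}^2 \leq \Op(\z) f(H) F(t) f(H) \Op(\z)^* + Ch^{2N}.
\]

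The heart of the proof is then to show $\bignorm{\Op(\z(t,\cdot,\cdot)) f(H) F(t)} = O(h^{2N})$ uniformly in $t\geq 0$. On $\supp[\z]$ one has $|x|\leq \d_1(h^{-1}+t)/2$, while on $\supp[\g]\subset\tilde\O(t)$ (modulo $S^{-\infty}_{h,t}$) one has $|x|\geq |y(t)|-2\d_1(h^{-1}+t)\geq \d_1(h^{-1}+t)$, using the estimate $|y(t)|\geq 3\d_1(h^{-1}+t)$ recalled in Section~4. Hence the two symbols have quantitatively separated $x$-supports at the natural scale of $S^0_{h,t}$; since $f(H)$ is a pseudodifferential operator with symbol in $S^0\subset S^0_{h,t}$, the standard asymptotic expansion for the composition of $S^0_{h,t}$ symbols with disjoint supports yields the desired $O(h^{2N})$ bound uniformly in $t$. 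The main technical hurdle is precisely this uniformity in $t$, which reduces to the fact that the $S^0_{h,t}$ seminorms of $\z$, $\g$, and the symbol of $f(H)$ are independent of $t$, so that the effective small parameter $(h^{-1}+t)^{-1}$ of the composition calculus controls the remainder uniformly for $t\geq 0$.
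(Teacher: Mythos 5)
Your overall strategy is the same as the paper's: run the Heisenberg integration from the proof of Proposition~\ref{prop:3-1} with $\Op(\z(t,\cdot,\cdot))$ in place of $\Op^h(a_1)$, commute $f(H)$ through $e^{-itH}$, and then exploit the separation of the $x$-supports of $\z$ and $\g$. You identified the support separation correctly: $|x|\leq \d_1(h^{-1}+t)/2$ on $\supp[\z]$ versus $|x|\geq \d_1(h^{-1}+t)$ on $\supp[\g]$ modulo $S^{-\infty}_{h,t}$. This is exactly what the paper sketches before stating the lemma: it records $\norm{\Op(\z)\Op(\g)}=O(h^{2N})$ and says the lemma follows ``analogously to Proposition~\ref{prop:3-1}.''

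However, your key step contains a false assertion. You write that the symbol of $f(H)$ lies in $S^0\subset S^0_{h,t}$, and then invoke the $S^0_{h,t}$ disjoint-support calculus for the three-fold composition $\z\# p\#\g$. The inclusion $S^0\subset S^0_{h,t}$ is not true: a symbol $p\in S^0$ satisfies $\bigabs{\pa_x^\a\pa_\x^\b p}\leq C\jap{x}^{-|\a|}$, which does \emph{not} give the extra decay $\min\bigpare{\jap{x}^{-|\a|},(h^{-1}+t)^{-|\a|}}$ required by the definition of $S^0_{h,t}$ when $\jap{x}\ll h^{-1}+t$ (e.g.\ near $x=0$). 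Consequently you cannot directly feed $p$ into the $S^0_{h,t}$ composition calculus and claim $(h^{-1}+t)^{-1}$ as the effective small parameter controlling all the remainders uniformly in $t$. The way around it, consistent with what the paper actually records, is to keep $f(H)$ out of the $\z$--$\g$ collision altogether: show $\norm{\Op(\z)\Op(\g)}=O(h^{2N})$ by the disjoint-support argument applied to the two genuine $S^0_{h,t}$ (hence $S^0_h$) symbols $\z$ and $\g$, and then observe that since $\g$ is essentially supported where $\jap{x}\gtrsim h^{-1}+t$, the composed symbol of $f(H)\Op(\g)$ (or equivalently the commutator $[f(H),\Op(\g)]$) again belongs to $S^0_{h,t}$ (resp.\ $S^{-1}_{h,t}$) and is essentially supported in $\supp[\g]$, because on a neighborhood of $\supp[\g]$ the derivatives of the symbol of $f(H)$ do obey the $S^0_{h,t}$ bounds. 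With that refinement your proof goes through; as written, the assertion $S^0\subset S^0_{h,t}$ is a genuine gap.
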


We then have, using the decomposition $1=\z+\bar\z+(1-\chi)$, 
\begin{align*}
&\bignorm{\jap{D_\x}^{-\nu} e^{-itH} f(H) \Op^h(a_2)} \\
&\leq \bignorm{\jap{D_\x}^{-\nu}\Op(\bar\z)}\, \bignorm{e^{-itH} f(H) \Op^h(a_2)} \\
&\quad + \bignorm{\jap{D_\x}^{-\nu}}\, \bignorm{\Op(\z)e^{-itH}f(H) \Op^h(a_2)} \\
&\quad + \bignorm{\jap{D_\x}^{-\nu}}\, \bignorm{(1-\chi(\x))f(H)}\, \bignorm{e^{-itH}f(H) \Op^h(a_2)}\\
&\leq C(h^{-1}+t)^{-\nu} +C_N h^N
\end{align*}
for $h\in (0,1]$, $t\geq 0$. On the other hand, by the local decay estimate \eqref{eq-3-4}, 
we have 
\[
\bignorm{\jap{D_\x}^{-\nu}e^{-itH} f(H) \Op^h(a_2)} \leq C h^{-\nu}\jap{t}^{-\k},
\quad t\in\re,
\]
with $1<\k<\nu$. By setting $T=h^{-N/2}$ and choosing $N$ large enough, we have 
\begin{align*}
&\int_0^\infty \bignorm{\jap{D_\x}^{-\nu} e^{-itH} f(H) \Op^h(a_2)}dt 
\leq \int_0^T\cdots dt +\int_T^\infty \cdots dt \\
&\quad \leq C\int_0^\infty (h^{-1}+t)^{-\nu}dt + C_N h^{N/2} + C h^{-\nu+(\k-1)N/2}
\leq C h^{\nu-1}.
\end{align*}
Thus we obtain the following:

\begin{lem}
Let $\nu>1$. Then 
\[
\bignorm{\jap{D_\x}^{-\nu} (H-\l-i0)^{-1} \Op^h(a_2)}\leq C h^{\nu-1}, \quad h\in (0,1]. 
\]
\end{lem}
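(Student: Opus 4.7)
The plan is to split the resolvent using the energy cutoff $f(H)$ of Section~4,
\[
(H-\l-i0)^{-1} = (H-\l-i0)^{-1}f(H) + (H-\l-i0)^{-1}\bigpare{1-f(H)},
\]
and handle the two summands by different mechanisms. The first is where the limiting-absorption singularity sits and requires time integration; the second is a genuine bounded pseudodifferential operator, handled by symbol calculus.

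For the first summand I would use the weak-sense identity
\[
(H-\l-i0)^{-1}f(H) = i\int_0^\infty e^{it\l} e^{-itH}f(H)\,dt
\]
as in Section~3.4. Sandwiching between $\jap{D_\x}^{-\nu}$ on the left and $\Op^h(a_2)$ on the right and invoking the bound
\[
\int_0^\infty \bignorm{\jap{D_\x}^{-\nu}e^{-itH}f(H)\Op^h(a_2)}\,dt\leq Ch^{\nu-1}
\]
derived in the computation just above the lemma, a standard dominated-convergence argument (applied to matrix elements against $L^2$ vectors) upgrades the weak identity to a norm-convergent Bochner integral in $B(L^2)$, giving
\[
\bignorm{\jap{D_\x}^{-\nu}(H-\l-i0)^{-1}f(H)\Op^h(a_2)}\leq Ch^{\nu-1}.
\]

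For the second summand, since $f\equiv 1$ on a neighborhood of $\l$, the function $(z-\l)^{-1}\bigpare{1-f(z)}$ lies in $S^0(\re)$, so by the Helffer--Sj\"ostrand functional calculus (cf.\ \cite{DS}) the operator $B:=(H-\l-i0)^{-1}\bigpare{1-f(H)}$ is a pseudodifferential operator with symbol $b\in S^0$. The symbol $a_2(hx,\x)$ of $\Op^h(a_2)$ is supported in $\{|hx-x_2|\leq\d_1\}$, so $|x|\sim h^{-1}$ throughout; the composition $\jap{D_\x}^{-\nu}B\Op^h(a_2)$ then has leading symbol $\jap{x}^{-\nu}b(x,\x)a_2(hx,\x)$, pointwise $O(h^\nu)$ on that support, and the lower-order terms in the Weyl expansion gain further powers of $h$ from $\pa_x^\a a_2(hx,\x)=h^{|\a|}(\pa_x^\a a_2)(hx,\x)$. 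Calder\'on--Vaillancourt therefore yields
\[
\bignorm{\jap{D_\x}^{-\nu}B\Op^h(a_2)}=O(h^\nu),
\]
which is even stronger than $O(h^{\nu-1})$. Summing the two contributions completes the proof. The only nontrivial step is the first summand, where the propagation estimates and the local-decay estimate \eqref{eq-3-4} are used through the integrated bound above; the second is routine $\Op^h$ bookkeeping parallel to that of Section~4.
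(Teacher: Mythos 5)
Your proposal matches the paper's route exactly for the main term: the paper also deduces the lemma from the integrated bound $\int_0^\infty\bignorm{\jap{D_\x}^{-\nu}e^{-itH}f(H)\Op^h(a_2)}\,dt\leq Ch^{\nu-1}$ computed just above, via the weak-sense representation $(H-\l-i0)^{-1}f(H)=i\int_0^\infty e^{it\l}e^{-itH}f(H)\,dt$. Your explicit $O(h^\nu)$ treatment of the $(1-f(H))$ remainder via $(z-\l)^{-1}(1-f(z))\in S^0(\re)$ and Calder\'on--Vaillancourt is also correct, and in fact makes precise a step the paper glosses over (the lemma is stated without $f(H)$ but the displayed computation covers only the $f(H)$ piece; the paper only returns to $(H-\l-i0)^{-1}(1-f(H))$ as a pseudodifferential operator a few lines after the lemma).
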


Now suppose $\tilde a\in S^0(M)$ such that its essential support in contained in a 
small conic neighborhood of $(x_2,\x_2)$. Then by the standard Littlewood-Paley 
decomposition argument, we learn 
\[
(H-\l-i0)^{-1}f(H)\Op(\tilde a)\quad \text{is bounded from $H^{-s}(M)$ to 
$H^{-\nu}(M)$},
\]
where $0<s<\nu-1$. Since $(H-\l-i0)^{-1}(1-f(H))$ is a pseudodifferential operator, 
it is also bounded from $H^{-s}(M)$ to $H^{-s}(M)\subset H^{-\nu}(M)$. Thus 
$(H-\l-i0)^{-1}\Op(\tilde a)$ is bounded from $H^{-s}(M)$ to $H^{-\nu}(M)$. 
Now Theorem~\ref{thm:5-1} follows by the partition of unity argument. \qed

%=================================================================
% References
%=================================================================
%
\vskip24pt
\small
%
%\addcontentsline{toc}{section}{References}
\centerline{\bf References}
\vskip12pt

\begin{enumerate}
\renewcommand{\labelenumi}{[\arabic{enumi}]}
\renewcommand{\makelabel}{\rm}
\setcounter{enumi}{0}
\setlength{\itemsep}{-3pt}
\setlength{\parsep}{0cm}

\bibitem{ABG} Amrein, W., Boutet de Monvel, A.,  Georgescu, V.: 
$C_0$-groups, commutator methods and spectral theory of $N$-body Hamiltonians.
Progress in Mathematics, 135. Birkh\"auser Verlag, Basel, 1996. 

\bibitem{DS} Dimassi, M., Sj\"ostrand:
{\it Spectral Asymptotics in the Semi-Classical Limit}\/  
(London Mathematical Society Lecture Note Series), 
Cambridge University Press, 1999. 

\bibitem{Ge}  G\'erard, C.: 
A proof of the abstract limiting absorption principle by energy estimates. 
J. Funct. Anal. {\bf 254} (2008), no. 11, 2707--2724.

\bibitem{Ho} H\"ormander, L.: 
The Analysis of Linear Partial Differential Operators.  I--IV, Springer-Verlag, New York, 1983--1985. 

\bibitem{IK1} Isozaki, H., Kitada, H.: Microlocal resolvent estimates for 2-body Schr\"odinger operators. 
J. Funct. Anal. {\bf 57} (1984), no. 3, 270--300.

\bibitem{IK2} Isozaki, H., Kitada, H.: Modified wave operators with time-independent modifiers. 
J. Fac. Sci. Univ. Tokyo Sect. IA Math. {\bf 32} (1985), no. 1, 77--104.

\bibitem{IK3} Isozaki, H., Kitada, H.: A remark on the microlocal resolvent estimates for two body 
Schr\"odinger operators. 
Publ. Res. Inst. Math. Sci. {\bf 21} (1985), no. 5, 889--910.

\bibitem{IK4} Isozaki, H., Kitada, H.: Scattering matrices for two-body Schr\"odinger operators. 
Sci. Papers College Arts Sci. Univ. Tokyo {\bf 35} (1985), no. 2, 81--107.

\bibitem{JMP} Jensen, A., Mourre, E., Perry, P.:
Multiple commutator estimates and resolvent smoothness in quantum scattering theory. 
Ann. Inst. H. Poincar\'e Phys. Th\'eor. {\bf 41} (1984), no. 2, 207--225. 

\bibitem{Ma} Martinez, A.: 
{\it An Introduction to Semiclassical and Microlocal Analysis}, Universitext, 
Springer Verlag, 2002. 

\bibitem{M1} Mourre, E.: 
Absence of singular continuous spectrum for certain selfadjoint operators.
Comm. Math. Phys. {\bf 78} (1980/81), no. 3, 391--408. 

\bibitem{M2} Mourre, E.: Operateurs conjugu\'es et propri\'et\'es de propagation. 
Comm. Math. Phys. {\bf 91} (1983), no. 2, 279--300.

\bibitem{N1}Nakamura, S.:
Propagation of the homogeneous wave front set for Schr\"odinger equations. 
Duke Math. J. {\bf 126}  (2005), 349--367.

\bibitem{N2} Nakamura, S.:
Modified wave operators for discrete Schr\"odinger operators with long-range perturbations.
J. Math. Phys. {\bf 55} (2014), 112101 (8 pages).

\bibitem{N3} Nakamura, S.:
Microlocal properties of scattering matrices. To appear in Comm. P. D. E. 
(Preprint: {\tt http://arxiv.org/abs/1407.8299})

\end{enumerate}

\end{document}